\newtheorem{thm}{Theorem}[section]
\newtheorem{cor}[thm]{Corollary}
\newtheorem{pro}[thm]{Proposition}
\theoremstyle{definition}
\theoremstyle{remark}
\newtheorem*{remark}{Remark}
\newtheorem*{remark1}{Remark 1}
\newtheorem*{remark2}{Remark 2}
\newtheorem*{remark3}{Remark 3}
\newcommand{\la}{\lambda}
\newcommand{\mF}{\mathcal{F}}
\newcommand{\mL}{\mathcal{L}}
\newcommand{\bnH}{\tilde{H}}
\newcommand{\Hla}{\mathcal{H}_{\alpha}}
\newcommand{\pst}{\varphi_p^*}
\DeclareMathOperator{\Arg}{Arg}
\DeclareMathOperator{\Dom}{Dom}
\DeclareMathOperator{\Ran}{Ran}
\DeclareMathOperator{\supp}{supp}
\title[Riesz transforms and $H^{\infty}$ joint functional calculus]{Dimension free $L^p$ estimates for Riesz transforms via an $H^{\infty}$ joint functional calculus}
\author{B\l a\.{z}ej Wr\'obel}
\address{Instytut Matematyczny, Uniwersytet Wroc\l awski, pl. Grunwaldzki 2/4, 50-384 Wroc\l aw, Poland \newline \&
Scuola Normale Superiore, Piazza dei Cavalieri 7, 56126 Pisa,   Italy
}
\email{\ \newline
blazej.wrobel@math.uni.wroc.pl}
\subjclass[2010]{47A60, 42B15, 42C10}
\keywords{Riesz transforms, dimension free $L^p$ estimates, $H^{\infty}$ joint functional calculus}
\thanks{I thank Prof.\ Fulvio Ricci, for a certain conversation that inspired the creation of this article.}
\begin{document}

 \begin{abstract}

  By using an $H^{\infty}$ joint functional calculus for strongly commuting operators, we derive a scheme to deduce the $L^p$ boundedness of certain $d$-dimensional Riesz transforms from the $L^p$ boundedness of appropriate one-dimensional Riesz transforms. Moreover, the $L^p$ bounds we obtain are independent of the dimension. The scheme is applied to Riesz transforms connected with orthogonal expansions and discrete Riesz transforms on products of groups with polynomial growth.
 \end{abstract}
  \maketitle
 \numberwithin{equation}{section}
 \section{Introduction}

        In \cite{SteinRiesz} Stein proved dimension free $L^p$ estimates for the classical Riesz transforms on $\mathbb{R}^d.$ Since then many analogous results were obtained for Riesz transforms defined in various settings.

       Several authors investigated dimension free $L^p$ estimates for Riesz transforms connected with (discrete) orthogonal expansions. In the Hermite polynomial case (Ornstein-Uhlenbeck-Riesz transforms) this was studied by Meyer \cite{Mey1}, Pisier \cite{Pis1}, and Gutierrez \cite{Gut1}. Dimension free bounds for Hermite-Riesz transforms (the setting of Hermite function expansions) can be deduced, by means of transference, from the paper of Couhlon, M\"uller, and Zienkiewicz \cite{CMZ} (see also \cite{HRST} and \cite{LP2} for different proofs). The Jacobi polynomial setting was treated by Nowak and Sj\"ogren \cite{NSj2}, who obtained dimension and parameter free estimates for the considered Riesz transforms. In the Laguerre polynomial case the topic was initiated by Guttierrez, Incognito and Torrea \cite{GIT} (half-integer multi-indices), and completed by Nowak \cite{No1} (continuous range of multi-indices). In both \cite{GIT} and \cite{No1} the authors proved dimension and parameter free bounds for the considered Riesz transforms. Dimension and parameter free estimates for Riesz transforms in the setting of Laguerre function expansions of Hermite type were obtained by Stempak and the author \cite{StWr}. Additionally, Dragi\v{c}evi\'{c} and Volberg in \cite{DragVol0} and  \cite{DragVol1} (see also \cite{DragVol2}), proved presumably optimal in $p$ dimension free $L^p$ estimates for the Ornstein-Uhlenbeck-Riesz transforms, and Hermite-Riesz transforms, respectively.

       The phenomenon of dimension free $L^p$ bounds for Riesz transforms was also studied in other contexts. In the above-mentioned article \cite{CMZ}, Couhlon, M\"uller, and Zienkiewicz, showed dimension free estimates for Riesz transforms associated to the sub-Laplacian on the Heisenberg group. In two papers \cite{Lu_Piqu2} and \cite{Lu_Piqu1}, Lust-Piquard proved dimension free bounds for Riesz transforms on $l^p(\{-1,1\}^d),$ and, more generaly, for discrete Riesz transforms on products of abelian groups. Li \cite{Li1}, generalizing an earlier result of Bakry \cite{Bakr1}, proved a dimension free estimate for Riesz transforms on complete Riemannian manifolds, under the assumption that the Bakry-Emery Ricci curvature is bounded from below. In \cite{UrZien} Urban and Zienkiewicz investigated dimension free bounds for Riesz transforms of some Schr\"odinger operators.

        We introduce a setting which formally encompasses all Riesz transforms built in a 'product' manner. In particular it covers all the Riesz transforms connected with (discrete) orthogonal expansions mentioned above, as well as those associated with (continuous) Bessel expansions (which were studied in \cite{Bet1}). This setting also includes the Riesz transforms studied in \cite{Lu_Piqu1} and \cite{Lu_Piqu2}.

        Assume that $L=(L_1,\ldots,L_d)$ is a system of non-negative self-adjoint operators such that each $L_r$ acts on $L^2(X_r,\nu_r),$ $r=1,\ldots,d.$ By $r$-th first order multi-dimensional Riesz transform associated to the system $L$ we mean \begin{equation}\label{eq:Rieszdef} R_r=\delta_r(L_1+\ldots + L_d)^{-1/2}.\end{equation}
        Here $\delta_r$ is a certain operator acting on a dense subspace of $L^2(X_r,\nu_r)$ (hence, by a tensorization argument also on a dense subspace of $L^2(\prod_{r=1}^d X_r, \bigotimes_{r=1}^d \nu_r)$).
        To be precise, if $0$ is an eigenvalue of $L,$ then the definition of $R_r$ needs to be slightly modified; this is properly explained in all the needed cases. Often $\delta_r$ and $L_r$ are related by $L_r=\delta_r^*\delta_r+a_r,$ with $a_r\geq 0;$ in particular this is the case in Theorems \ref{thm:RieszOrt} and \ref{thm:Rieszdiscgen}.

        In Section \ref{sec:ort} we show how \eqref{eq:Rieszdef} can be formalized for Riesz transforms connected with (discrete and continuous) orthogonal expansions. Then, in Section \ref{sec:disc} we formalize \eqref{eq:Rieszdef} for Riesz transforms on products of discrete groups having polynomial volume growth. For the time being let us only note that the classical Riesz transforms are indeed of the form \eqref{eq:Rieszdef} with $\delta_r=\partial_r,$ $L_r=-\partial_r^2,$ and $(X_r,\nu_r)=(\mathbb{R},dx_r),$  $r=1,\ldots,d.$

        The main goal of this paper is to establish a connection between an $H^{\infty}$ joint functional calculus (for a pair of strongly commuting operators) and (dimension free) $L^p$ boundedness of multi-dimensional Riesz transforms given by \eqref{eq:Rieszdef}. In Corollary \ref{cor:Scheme} we present a scheme allowing to deduce dimension free $L^p$ bounds for these Riesz transforms. Here a crucial tool is the functional calculus from Proposition \ref{cor:genMarHinfdimindep}.

         Corollary \ref{cor:Scheme} is then applied to Riesz transforms connected with orthogonal expansions (see Theorem \ref{thm:RieszOrt}), and to Riesz transforms on products of discrete groups having polynomial volume growth (see Theorem \ref{thm:Rieszdiscgen}). For the first of these Riesz transforms our method gives several new results. Namely, we obtain dimension free $L^p$ bounds for multi-dimensional Riesz transforms in the settings of: Laguerre expansions of convolution type, Jacobi function expansions, Fourier-Bessel expansions, and Bessel expansions (the Hankel transform setting). Additionally, in the cases of Laguerre polynomial expansions and Laguerre function expansions of Hermite type we enlarge the range of admitted parameters. For the Riesz transforms on products of discrete groups, using Theorem \ref{thm:Rieszdiscgen} we are able to improve \cite[Theorem 2.8]{Lu_Piqu1}, see Corollary \ref{cor:RieszgenLu_Piqu}.

        Let us remark that the methods for proving dimension free bounds we present here seem quite general, whereas the anterior methods were rather tailored to each of the specific cases. It also seems that because of that generality we can not hope to obtain dimension free bounds for vectors of Riesz transforms. Indeed, the counterexample in \cite[Proposition 2.9]{Lu_Piqu1} shows that, even for the discrete Riesz transforms on $\mathbb{Z}^d,$ there are no dimension free bounds for the vector of Riesz transforms on $l^p(\mathbb{Z}^d),$ for $1<p<2.$

        A large part of the material presented in this paper was included in the PhD thesis of the author \cite{PhD}.

\section{Preeliminaries}
\label{sec:set}
 Consider a system $L=(L_1,\ldots,L_d)$ of non-negative self-adjoint operators on $L^2(X,\nu),$ for some $\sigma$-finite measure space $(X,\nu).$ We assume that the operators $L_r$ commute strongly, i.e.\ that their spectral projections $E_{L_r},$ $r=1,\ldots,d,$ commute pairwise. In that case, there exists the joint spectral measure $E$ associated with $L,$ determined uniquely by the condition
        \begin{equation*}
        L_r=\int_{[0,\infty)}\la_r dE_{L_r}(\la_r)=\int_{[0,\infty)^d} \la_r dE(\la),
        \end{equation*}
        see \cite[Theorem 4.10 and Theorems 5.21, 5.23]{schmu:dgen}.
        Consequently, for a Borel measurable function $m$ on $[0,\infty)^d,$ the multivariate spectral theorem allows us to define
         \begin{equation}
         \label{m(L)def}
        m(L)=m(L_1,\ldots,L_d)=\int_{[0,\infty)^d} m(\la) dE(\la)
        \end{equation}
        on the domain
         \begin{equation}\label{m(L)dom} \Dom(m(L))=\bigg\{f\in L^2(X,\nu)\colon \int_{[0,\infty]^d}|m(\la)|^2dE_{f,f}(\la)<\infty\bigg\}.\end{equation}
         Here $E_{f,f}$ is the complex measure defined by $E_{f,f}(\cdot)=\langle E(\cdot)f,f\rangle_{L^2(X,\nu)}.$

        The crucial assumption we make is the $L^p(X,\nu)$ contractivity of the heat semigroups $e^{-tL_r},$ $r=1,\ldots,d.$ More precisely, we impose that, for all $1\leq p\leq \infty$ and $t_r>0,$
\begin{equation*}
\tag{CTR}
\label{contra}
\|e^{-t_rL_r}f\|_{L^p(X,\nu)}\leq \|f\|_{L^p(X,\nu)},\qquad f\in L^p(X,\nu)\cap L^2(X,\nu).
\end{equation*}

 For technical reasons we also assume the atomlessness condition
\begin{equation*}\tag{ATL} \label{noatomatzero}E_{L_r}(\{0\})=0,\qquad r=1,\ldots,d,\end{equation*} where  $E_{L_r}$ denotes the spectral measure of $L_r.$ Note that, under \eqref{noatomatzero}, the definition of $m(L)$ may be rewritten as $m(L)=\int_{(0,\infty)^d}m(\la)dE(\la).$

Later on we often work in a product setting, when $(X,\nu)=(\Pi_{r=1}^dX_r,\bigotimes_{r=1}^d\nu_r).$ In that case, for a self-adjoint or bounded operator $T$ on $L^2(X_r,\nu_r)$ we define
\begin{equation}
\label{eq:tensnot}
T\otimes I_{(r)}=I_{L^2(X_1,\nu_1)}\otimes \cdots \otimes I_{L^2(X_{r-1},\nu_{r-1})}\otimes T\otimes I_{L^2(X_{r+1},\nu_{r+1})} \otimes \cdots \otimes I_{L^2(X_d,\nu_d)}.
\end{equation} If $T$ is self-adjoint, then the operators $T\otimes I_{(r)}$ can be regarded as non-negative self-adjoint and strongly commuting operators on $L^2(X,\nu),$ see \cite[Theorem 7.23]{schmu:dgen} and \cite[Proposition A.2.2]{PhD}. Moreover, if $T$ is bounded on $L^p(X_r,\nu_r),$ for some $1\leq p<\infty,$ then, by Fubini's theorem $T\otimes I_{(r)}$ is a bounded operator on $L^p(X,\nu)$ and
        \begin{equation}
        \label{eq:tensnormeq}
        \|T\|_{L^p(X_r,\nu_r)\to L^p(X_r,\nu_r)}=\|T\otimes I_{(r)}\|_{L^p(X,\nu)\to L^p(X,\nu)}.
        \end{equation}

In the applications we give, i.e.\ in Theorems \ref{thm:RieszOrt} and \ref{thm:Rieszdiscgen}, the operators $L_r$ act on different variables, that is, each $L_r$ is defined on $\Dom(L_r)\subset L^2(X_r,\nu_r),$ $r=1,\ldots,d.$ Then, it can be proved that $E_{L_r}\otimes I_{(r)}$ is the spectral measure of the operator $L_r\otimes I_{(r)},$ $r=1,\ldots,d.$ From this observation it follows that, if, for some $r=1,\ldots,d,$ the operator $L_r$ satisfies the atomlessness condition \eqref{noatomatzero}, then the same is true for $L_r\otimes I_{(r)}.$

Additionally, for a bounded function $m_r:(0,\infty)\to \mathbb{C},$ we have $m_r(L_r)\otimes I_{(r)}=m_r(L_r\otimes I_{(r)}).$ Thus, using \eqref{eq:tensnormeq}, it is not hard to deduce that, if the operator $L_r$ satisfies the contractivity condition \eqref{contra} (with respect to $L^p(X_r,\nu_r)$), then $L_r\otimes I_{(r)}$ satisfies \eqref{contra} as well (with respect to $L^p(X,\nu)$).

Throughout the paper the following notation is used. The symbols $\mathbb{N}_0$ and $\mathbb{N}$ stand for the sets of non-negative and positive integers, respectively. By $\mathbb{N}_0^d$ we mean $(\mathbb{N}_0)^d.$

For a pair of angles $\varphi=(\varphi_1,\varphi_2)\in (0,\pi/2]^2,$ we denote by ${\bf S}_{\varphi}$ the polysector (contained in the product of the right complex half-planes) $${\bf S}_{\varphi}=\{(z_1,z_2)\in \mathbb{C}^2\colon z_r\neq0,\quad |\Arg(z_r)|<\varphi_r,\quad r=1,2\}.$$ In particular, ${\bf S}_{\pi/2,\pi/2}$ means the product of the right half-planes. If $U$ is an open subset of $\mathbb{C}^2,$ the symbol $H^{\infty}(U)$ stands for the vector space of bounded functions on $U,$ which are holomorphic in several variables. The space $H^{\infty}(U)$ is equipped with the supremum norm.

 We use the variable constant convention, i.e.\ constants (such as $C,$ $C_p$ or $C(p),$ etc.) may vary from one occurrence to another. In most cases we shall however keep track of the parameters on which the constant depends, (e.g.\ $C$ denotes a universal constant, while $C_p$ and $C(p)$ denote constants which may also depend on $p$).

 Let $B_1,B_2$ be Banach spaces and let $F$ be a dense linear subspace of $B_1.$ We say that a linear operator $T\colon F\to B_2$ is bounded, if it has a (unique) bounded extension to $B_1.$

\section{The key theorem and its consequences}
\label{sec:Riesz}
      The crucial result for the paper is the following theorem.

        \begin{thm}
         \label{thm:genRiesz} Let $L=(L_1,\ldots,L_d)$ be a general system of non-negative self-adjoint strongly commuting operators on $L^2(X,\nu)$ satisfying \eqref{contra} and \eqref{noatomatzero}. Then, for each $r=1,\ldots,d,$ and $\sigma>0,$ the operator $L_r^{\sigma}(\sum_{r=1}^d L_r)^{-\sigma}$ is bounded on all $L^p(X,\nu)$ spaces, $1<p<\infty.$ Moreover,
         \begin{equation}
         \label{eq:thm:genRiesz}
         \max_{r=1,...,d}\|L_r^{\sigma}(L_1+\cdots+L_d)^{-\sigma}\|_{L^p(X,\nu)\to L^p(X,\nu)}\leq C_{p,\sigma},
         \end{equation}
         where the constant $C_{p,\sigma}$ is independent of ('the dimension') $d.$
         \end{thm}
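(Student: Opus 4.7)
The plan is to realise $L_r^\sigma(L_1+\cdots+L_d)^{-\sigma}$ as a value of the joint $H^\infty$ functional calculus for a pair of strongly commuting operators, and then invoke Proposition \ref{cor:genMarHinfdimindep} to obtain a bound that depends only on the $H^\infty$-norm of the symbol.

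First I would fix $r$ and set $S_r=\sum_{s\neq r}L_s$, so that $L_1+\cdots+L_d=L_r+S_r$. The two operators $L_r$ and $S_r$ act on different coordinate-free entries of the spectral measure $E$ of $L$; hence they are non-negative, self-adjoint, and strongly commute, and their joint spectral measure is the push-forward of $E$ under $(\lambda_1,\dots,\lambda_d)\mapsto (\lambda_r,\sum_{s\neq r}\lambda_s)$. From this pushforward description I would check the two hypotheses needed to apply Proposition \ref{cor:genMarHinfdimindep} to the pair $(L_r,S_r)$: contractivity of the heat semigroups and atomlessness at $0$. Contractivity of $e^{-tL_r}$ is given in \eqref{contra}; for $S_r$ strong commutativity yields $e^{-tS_r}=\prod_{s\neq r}e^{-tL_s}$, a composition of $L^p$-contractions, hence itself an $L^p$-contraction. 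Atomlessness of $L_r$ is assumed; atomlessness of $S_r$ follows from $E_{S_r}(\{0\})\leq E_{L_s\otimes I_{(s)}}(\{0\})=E_{L_s}(\{0\})=0$ for any $s\neq r$.

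Next I would introduce the symbol
\[
m(z_1,z_2)=z_1^{\sigma}(z_1+z_2)^{-\sigma}=\bigl(z_1/(z_1+z_2)\bigr)^{\sigma},
\]
so that, on the joint spectral side, $m(L_r,S_r)=L_r^{\sigma}(L_r+S_r)^{-\sigma}=L_r^{\sigma}(L_1+\cdots+L_d)^{-\sigma}$. The function $m$ is holomorphic on $\mathbf{S}_{\pi/2,\pi/2}$ (principal branch; both $z_1$ and $z_1+z_2$ lie in the right half-plane), and on a slightly smaller polysector $\mathbf{S}_{\varphi_1,\varphi_2}$ with $\varphi_1,\varphi_2<\pi/2$ the elementary bound
\[
|z_1+z_2|\geq \Real(z_1)+\Real(z_2)\geq |z_1|\cos\varphi_1
\]
gives $\|m\|_{H^{\infty}(\mathbf{S}_{\varphi_1,\varphi_2})}\leq (\cos\varphi_1)^{-\sigma}$, a constant that manifestly does not depend on $d$.

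Finally I would feed $m$ into the joint $H^\infty$ calculus of $(L_r,S_r)$ supplied by Proposition \ref{cor:genMarHinfdimindep}. Because the hypotheses verified in step one are stable under the passage from $L$ to $(L_r,S_r)$, the resulting $L^p$-operator norm of $m(L_r,S_r)$ is controlled by $C_{p,\varphi_1,\varphi_2}\|m\|_{H^{\infty}(\mathbf{S}_{\varphi_1,\varphi_2})}$ with $C_{p,\varphi_1,\varphi_2}$ coming from that proposition and therefore independent of $d$. Taking the maximum over $r$ yields \eqref{eq:thm:genRiesz}. The only subtle point where I expect friction is confirming that the hypotheses of Proposition \ref{cor:genMarHinfdimindep} apply verbatim to the derived pair $(L_r,S_r)$ rather than only to $d$-tuples from the original system; this is a bookkeeping issue about the pushforward joint spectral measure together with the tensor-product semigroup identity for $e^{-tS_r}$, both of which are standard once the strong commutativity of $L$ is in hand.
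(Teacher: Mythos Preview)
Your argument is correct and essentially identical to the paper's own proof: both reduce to the pair $(L_r,\sum_{s\neq r}L_s)$, verify \eqref{contra} via the semigroup product and \eqref{noatomatzero} via the spectral projection inequality, and then apply Proposition~\ref{cor:genMarHinfdimindep} to the symbol $m(z_1,z_2)=z_1^{\sigma}(z_1+z_2)^{-\sigma}$. The only cosmetic slip is the appearance of the tensor notation $L_s\otimes I_{(s)}$ in your atomlessness check, which is out of place here since Theorem~\ref{thm:genRiesz} is stated for a general commuting system on a single space $L^2(X,\nu)$; the inequality $E_{S_r}(\{0\})\leq E_{L_s}(\{0\})$ still holds by the joint spectral calculus, so this does not affect the validity of the argument.
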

         We deduce Theorem \ref{thm:genRiesz} from the following $H^{\infty}$ joint functional calculus for a pair of strongly commuting operators. In what follows $\pst=\arcsin|2/p-1|.$
         \begin{pro}
         \label{cor:genMarHinfdimindep}
         Let $(T,S)$ be a pair of non-negative self-adjoint strongly commuting operators on $L^2(X,\nu)$ satisfying \eqref{contra} and \eqref{noatomatzero}. Fix $1<p<\infty$ and let $m$ be a bounded holomorphic function on ${\bf S}_{\varphi},$ for some $\varphi=(\varphi_1,\varphi_2),$ with $\varphi_r>\pst,$ $r=1,2.$ Then the multiplier operator $m(T,S)$ is bounded on $L^p(X,\nu)$ and
         \begin{equation}
         \label{eq:dimindepineq}
         \|m(T,S)\|_{L^p(X,\nu)\to L^p(X,\nu)}\leq C_p\|m\|_{H^{\infty}({\bf S}_{\varphi})},
         \end{equation}
         where the constant $C_p$ depends only on $p$ and not on the operators $T$ and $S.$
         \end{pro}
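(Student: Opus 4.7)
The strategy is to reduce the bivariate $H^{\infty}$ bound to the univariate one (Cowling's theorem, in the form due to Cowling--Doust--McIntosh--Yagi and to Meda), exploiting strong commutativity to iterate. The crucial leverage is that Cowling's theorem provides constants depending only on $p$ and the sector gap above $\pst$, so the iteration will preserve the operator-independence that is essential for the dimension-free estimates of Theorem \ref{thm:genRiesz}.

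First apply the one-operator theorem to $T$ and $S$ separately. The $L^p$-contractivity \eqref{contra} of the heat semigroups $e^{-tT}$ and $e^{-sS}$, combined with atomlessness \eqref{noatomatzero}, gives bounded $H^{\infty}(S_\psi)$-calculi on $L^p$ for every $\psi>\pst$; equivalently, one has imaginary-power bounds $\|T^{is}\|_{\pp},\ \|S^{is}\|_{\pp}\le C_p(1+|s|)^{\alpha}e^{\pst|s|}$ with $C_p,\alpha$ depending only on $p$. For an $m \in H^{\infty}(\mathbf{S}_{\varphi})$ with enough decay at $0$ and $\infty$, use a bivariate Mellin inversion: since $m$ extends holomorphically to a bisector strictly wider than $\mathbf{S}_{\pst,\pst}$, its two-variable Mellin symbol $\widehat{M}(s,t)$ decays like $e^{-\psi_1|s|-\psi_2|t|}$ for some $\psi_r \in (\pst,\varphi_r)$, and
\[ m(T,S) = \frac{1}{(2\pi)^2}\int_{\mathbb{R}^2}\widehat{M}(s,t)\,T^{is}S^{it}\,ds\,dt. \]
Strong commutativity identifies $T^{is}S^{it}$ with the composition of the individual imaginary powers via the joint spectral measure, so
\[ \|m(T,S)\|_{\pp} \le \int_{\mathbb{R}^2}|\widehat{M}(s,t)|\,\|T^{is}\|_{\pp}\|S^{it}\|_{\pp}\,ds\,dt \le C_p\|m\|_{H^{\infty}(\mathbf{S}_{\varphi})}, \]
the decay of $\widehat{M}$ outpacing the $e^{\pst(|s|+|t|)}$ growth of the imaginary powers.

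A general $m \in H^{\infty}(\mathbf{S}_{\varphi})$ need not produce an absolutely convergent Mellin integral, so the final step is a McIntosh-type convergence lemma: replace $m$ by $m_n(z_1,z_2) = m(z_1,z_2)\psi_n(z_1)\psi_n(z_2)$ with $\psi_n(z) = nz/(1+nz)^2$, which preserves $\|\cdot\|_{H^{\infty}(\mathbf{S}_{\varphi})}$ up to an absolute constant and has the decay required above. Atomlessness \eqref{noatomatzero} ensures that $\psi_n(T)\psi_n(S) \to I$ strongly on a dense subspace such as $\Ran(TS(I+T)^{-2}(I+S)^{-2})$, whence $m_n(T,S)f \to m(T,S)f$ there by dominated convergence against the joint spectral measure, and the uniform $L^p$-bound from the previous step passes to $m$. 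The main technical hurdle is this convergence step together with the verification that the constant depends only on $p$ --- not on the specific operators; that operator-independence is exactly what later drives the dimension-free Riesz estimate when the proposition is applied to $(L_r,\sum_{s\neq r}L_s)$.
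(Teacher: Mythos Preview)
Your overall architecture---univariate $H^{\infty}$ calculus for each of $T,S$, then a bivariate Mellin/imaginary-powers iteration exploiting strong commutativity, then a McIntosh-type approximation---is exactly the content behind the paper's sketch. The paper simply packages the iteration step by citing Albrecht--Franks--McIntosh \cite[Theorem 5.4]{AlFrMc} and obtains the single-operator calculus from Carbonaro--Dragi\v{c}evi\'{c} \cite[Theorem 1]{Carb-Drag}; your Mellin argument is one standard way to unpack the former citation. So the approaches coincide in substance.

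Two points need correcting. First, an attribution issue that affects the statement actually proved: Cowling's theorem (and the Cowling--Doust--McIntosh--Yagi / Meda circle of results) yields the angle $\pi|1/2-1/p|$, not the sharper $\pst=\arcsin|2/p-1|$. To get the imaginary-power bound $\|T^{is}\|_{\pp}\le C_p(1+|s|)^{\alpha}e^{\pst|s|}$ you need \cite[Theorem 1]{Carb-Drag}. The paper's Remark following the proposition makes exactly this distinction: Cowling alone gives only the weaker version, which happens to be enough for Theorem \ref{thm:genRiesz}, but the proposition as stated requires Carbonaro--Dragi\v{c}evi\'{c}.

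Second, your regularizer is wrong: $\psi_n(z)=nz/(1+nz)^2$ satisfies $\psi_n(\la)\to 0$ for each fixed $\la>0$, so $\psi_n(T)\psi_n(S)\to 0$, not $I$. Replace it by, for instance, $\psi_n(z)=\dfrac{nz}{(1+nz)(1+z/n)}$ (or any of the standard McIntosh mollifiers), which is uniformly bounded in $H^{\infty}(S_{\varphi_r})$, has the needed decay at $0$ and $\infty$, and converges pointwise to $1$ on $(0,\infty)$. With that change your convergence step goes through as written, using \eqref{noatomatzero} to ensure the joint spectral measure lives on $(0,\infty)^2$.
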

         \begin{proof}[Proof (sketch)]
         The proposition follows from an application of a variant of \cite[Theorem 5.4]{AlFrMc} due to Albrecht, Franks and McIntosh. By a recent result of Carbonaro and Dragi\v{c}evi\'{c} \cite[Theorem 1]{Carb-Drag} each of the operators $T$ and $S$ has a bounded $H^{\infty}$ functional calculus in $S_{\varphi_r},$ $r=1,2.$ A more detailed and slightly different justification of the proposition can be given along the lines of the proof of \cite[Corollary 4.1.2]{PhD}.
         \end{proof}
         \begin{remark}
         For the purpose of proving Theorem \ref{thm:genRiesz} it is enough to use a weaker version of Proposition \ref{cor:genMarHinfdimindep}, with $\pst$ replaced by $\pi|1/2-1/p|.$ The proof of the weaker statement employs Cowling's \cite[Theorem 3]{Hanonsemi} instead of \cite[Theorem 1]{Carb-Drag}.
         \end{remark}

        We proceed to the proof of Theorem \ref{thm:genRiesz}. Recall that various operators $m(L)$ built on $L$ are defined by the multivariate spectral theorem via \eqref{m(L)def}. In particular, each $m(L)$ is defined on the domain given by \eqref{m(L)dom}; for example, the domain of the operator $L_1+\ldots +L_d$ is the subspace
         $\{f\in L^2(X,\nu)\colon \int_{[0,\infty)^d}|\la_1+\ldots+\la_d|^2 dE_{f,f}(\la)\}.$
         All the formalities that are not properly explained in the proof of Theorem \ref{thm:genRiesz} can be easily derived from the multivariate spectral theorem, see e.g.\ \cite[Theorem 4.16]{schmu:dgen}. We decided not to write them explicitly in the proof in order not to obscure its idea.
        \begin{proof}[Proof of Theorem \ref{thm:genRiesz}]
        Clearly, by the multivariate spectral theorem, $L_r^{\sigma}(\sum_{r=1}^d L_r)^{-\sigma}$ is bounded on $L^2(X,\nu).$

        Denote $L_{(r)}=\sum_{s\neq r} L_s,$ so that $\sum_{r=1}^d L_r=L_r+L_{(r)}.$ Then $(L_r,L_{(r)})$ is a pair of strongly commuting self-adjoint non-negative operators on $L^2(X,\nu),$ which satisfy both the assumptions \eqref{contra} and \eqref{noatomatzero}. Indeed, the $L^p$ contractivity property \eqref{contra} of $\exp(-tL_{(r)})$ follows from the identity $e^{-tL_{(r)}}=\prod_{s\neq r}e^{-t L_{s}},$ $t>0;$ while \eqref{noatomatzero} is immediate once we note that $0\leq E_{L_{(r)}}(\{0\})\leq E_{L_s}(\{0\})=0,$ $s=1,\ldots,d,$ $s\neq r.$

         Defining $m_{\sigma}(z_1,z_2)=z_1^{\sigma}(z_1+z_2)^{-\sigma}$ (here we consider the principal branch of the complex power function) it is not hard to see that $m_{\sigma}$ is holomorphic in ${\bf S}_{\pi/2,\pi/2}$ and uniformly bounded in every ${\bf S}_{\phi},$ $\phi=(\phi_1,\phi_2)\in(0,\pi/2)^2.$ Moreover, we clearly have
         $m_{\sigma}(L_r,L_{(r)})=L_r^{\sigma}(\sum_{r=1}^d L_r)^{-\sigma}.$ Thus, using \eqref{eq:dimindepineq} to the system $(T,S)=(L_r,L_{(r)})$ we obtain the desired inequality \eqref{eq:thm:genRiesz}.
        \end{proof}

        From now on we consider systems of operators $L=(L_1,\ldots,L_d)$ such that each $L_r$ is non-negative and self-adjoint on some $L^2(X_r,\nu_r),$ where $(X_r,\nu_r),$ $r=1,\ldots,d,$ is a $\sigma$-finite measure space. We assume that each $L_r,$ $r=1,\ldots,d,$ satisfies the contractivity condition \eqref{contra} (with respect to $L^p(X_r,\nu_r)$) and the atomlessness condition \eqref{noatomatzero}. Denote $X=X_1\times\cdots\times X_d,$ $\nu=\nu_1\otimes\cdots\otimes \nu_d$ and, for $1\leq p\leq \infty,$ $L^p=L^p(X,\nu),$ $\|\cdot\|_p=\|\cdot\|_{L^p}$ and $\|\cdot\|_{p\to p}=\|\cdot\|_{L^p\to L^p}.$
        Recalling the discussion in Section \ref{sec:set}, we know that the operators $L_r\otimes I_{(r)},$ $r=1,\ldots,d,$ can be also regarded as non-negative self-adjoint strongly commuting operators on $L^2,$ that satisfy \eqref{contra} (with respect to $L^p$) and \eqref{noatomatzero}. In what follows, slightly abusing the notation, we usually do not distinguish between $L_r$ and $L_r\otimes I_{(r)}.$

      Let us see how Theorem \ref{thm:genRiesz} formally implies dimension free bounds of certain Riesz transforms. A multi-dimensional Riesz transform of order $j_r\in\mathbb{N}$ associated to the system $L=(L_1,\ldots,L_d)$ is an operator of the form $\delta_r^{j_r}(\sum_{r=1}^d L_r)^{-j_r/2}.$ Here $\delta_r$ is a certain operator acting on a dense subspace of $L^2(X_r,\nu_r)$ (hence, by a tensorization argument also on a dense subspace of $L^2$). The following corollary of Theorem \ref{thm:genRiesz} is rather informal, though, as we shall soon see, it can be easily formalized in many concrete cases.

     \begin{cor}
     \label{cor:Scheme}
     Let $r=1,\ldots,d,$ be fixed. Assume that, for some $j_r \in \mathbb{N},$ the one-dimensional Riesz transform $\delta_r^{j_r} L_r^{-j_r/2}$ of order $j_r$ is bounded on $L^p(X_r,\nu_r).$ Then the multi-dimensional Riesz transform of order $j_r$ is bounded on $L^p$ and
      \begin{equation}
      \label{eq:corschembound}
      \|\delta_r^{j_r}(L_1+\cdots+L_d)^{-j_r/2}\|_{p\to p}\leq C_{p,j_r}\|\delta_r^{j_r}L_r^{-j_r/2}\|_{L^p(X_r,\nu_r)\to L^p(X_r,\nu_r)},\end{equation}
      where the constant $C_{p,j_r}$ is independent of ('the dimension') $d.$
       \end{cor}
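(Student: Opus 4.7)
The plan is to reduce the multi-dimensional Riesz transform to a composition of two operators, each bounded on $L^p$, and combine the bounds. Formally, I would write
\begin{equation*}
\delta_r^{j_r}\Bigl(\sum_{s=1}^d L_s\Bigr)^{-j_r/2} \;=\; \bigl(\delta_r^{j_r} L_r^{-j_r/2}\bigr)\circ L_r^{j_r/2}\Bigl(\sum_{s=1}^d L_s\Bigr)^{-j_r/2},
\end{equation*}
so that the dimension-free bound is produced by estimating each factor separately.

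For the second factor I would invoke Theorem \ref{thm:genRiesz} directly with $\sigma = j_r/2$: since the operators $L_s\otimes I_{(s)}$ form a strongly commuting system of non-negative self-adjoint operators on $L^2$ satisfying \eqref{contra} and \eqref{noatomatzero} (as explained in Section \ref{sec:set}), the operator $L_r^{j_r/2}(L_1+\cdots+L_d)^{-j_r/2}$ is bounded on $L^p$ with a norm depending only on $p$ and $j_r$, independent of $d$. For the first factor, the one-dimensional operator $\delta_r^{j_r} L_r^{-j_r/2}$ is bounded on $L^p(X_r,\nu_r)$ by hypothesis, and its tensor extension $(\delta_r^{j_r} L_r^{-j_r/2})\otimes I_{(r)}$ is bounded on $L^p = L^p(X,\nu)$ with exactly the same norm by \eqref{eq:tensnormeq}. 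Multiplying the two bounds yields \eqref{eq:corschembound}.

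The main obstacle, and the reason the authors flag the statement as informal, is making sense of the factorization on a dense subspace: $\delta_r^{j_r}$ need not interact transparently with the joint spectral calculus of $L$, so one has to choose a suitable core. A natural candidate is the image of the joint spectral projection $E(R)$ for $R=\prod_{s=1}^d(\varepsilon_s, 1/\varepsilon_s)$, tensored across the $d$ variables; on such elements all powers $L_r^{\pm j_r/2}$ and $(L_1+\cdots+L_d)^{\pm j_r/2}$ are bounded, and the atomlessness assumption \eqref{noatomatzero} ensures $\bigcup_R E(R) L^2$ is dense. On this core the identity above is a consequence of the multivariate spectral theorem, and then the composition extends uniquely by density from $L^p\cap L^2$ to all of $L^p$. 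The reason the corollary has to be formulated informally is that the precise meaning of $\delta_r^{j_r}L_r^{-j_r/2}$ as an $L^p$-bounded operator depends on the concrete realization of $\delta_r$ in each application; this is exactly what the authors verify case by case in Theorems \ref{thm:RieszOrt} and \ref{thm:Rieszdiscgen}.
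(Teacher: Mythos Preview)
Your proof is correct and follows exactly the paper's approach: the same factorization $\delta_r^{j_r}(L_1+\cdots+L_d)^{-j_r/2}=(\delta_r^{j_r}L_r^{-j_r/2})(L_r^{j_r/2}(L_1+\cdots+L_d)^{-j_r/2})$, the same application of Theorem~\ref{thm:genRiesz} with $\sigma=j_r/2$ to the second factor, and the same use of \eqref{eq:tensnormeq} for the first. Your extra paragraph on choosing a spectral core to justify the factorization is a welcome elaboration of what the paper leaves deliberately informal and defers to the concrete cases in Theorems~\ref{thm:RieszOrt} and~\ref{thm:Rieszdiscgen}.
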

     \begin{proof}
     We decompose
      \begin{equation*}
      \delta_r^{j_r}(L_1+\cdots+L_d)^{-j_r/2}=(\delta_r^{j_r}L_r^{-j_r/2})(L_r^{j_r/2}(L_1+\cdots+L_d)^{-j_r/2}).
      \end{equation*}
      Since the system $L$ satisfies the assumptions of Section \ref{sec:set}, using Theorem \ref{thm:genRiesz} with $\sigma=j_r/2$ together with the fact that $\|\delta_r^{j_r} L_r^{-j_r/2}\|_{L^p(X_r,\nu_r)\to L^p(X_r,\nu_r)}=\|\delta_r^{j_r} L_r^{-j_r/2}\|_{p \to p}$ (cf.\ \eqref{eq:tensnormeq}), we obtain the desired bound \eqref{eq:corschembound}.
     \end{proof}
\begin{remark1}
In some cases we need a variant of the corollary that allows the operators $L_r$ to violate the atomlessness condition \eqref{noatomatzero}. Then, we need to add appropriate projections in the definitions of Riesz transforms. More details are provided in the specific cases when we use such a variant.
\end{remark1}
\begin{remark2}
In what follows, see Theorem \ref{thm:RieszOrt} and Theorem \ref{thm:Rieszdiscgen}, we are mostly interested in applying the corollary to first order multi-dimensional Riesz transforms, i.e.\ with $j_r=1.$
\end{remark2}
\begin{remark3}
The argument used in the proof of the corollary bears a resemblance with the method of rotations by Calder\'on and Zygmund, see \cite{Cal-Zyg1} or \cite[Corollary 4.8]{Duo}. Indeed, when applied to the classical (multi-dimensional) Riesz transforms on $\mathbb{R}^d,$ this method allows us to deduce their $L^p$ boundedness from the $L^p$ boundedness of the (one-dimensional) directional Hilbert transforms. However, the method of rotations does not give a dimension free bound for the classical Riesz transforms.
\end{remark3}

Till the end of the paper we focus on rigorous applications in particular cases of Corollary \ref{cor:Scheme} or its variations.

First observe that Corollary \ref{cor:Scheme} implies a dimension free estimate for the norms on $L^p(\mathbb{R}^d,dx),$ $1<p<\infty,$ of the classical Riesz transforms $R_r,$ $r=1,\ldots,d,$ cf.\ \cite{SteinRiesz}. In this case $L_r$ coincides with the self-adjoint extension on $L^2(\mathbb{R}^d,dx)$ of the operators $-\partial_r^2,$ $r=1,\ldots,d,$ initially defined on $C_c^{\infty}(\mathbb{R}^d).$ Then $L=(L_1,\ldots,L_d)$ is a system of strongly commuting operators on $L^2(\mathbb{R}^d,dx),$ satisfying the assumptions \eqref{contra} and \eqref{noatomatzero}. The one-dimensional Riesz transforms $\delta_r L_r^{-1/2}=\partial_r(-\partial_r^2)^{-1/2}$ of order $1$ and the operators $L_r^{1/2}(\sum_{r=1}^d L_r)^{-1/2},$ $r=1,\ldots,d,$  are defined as the Fourier multipliers $$\mF(\delta_r L_r^{-1/2} f)(\xi)={\rm sgn}\, \xi_r\,\mF f(\xi)=\frac{\xi_r}{|\xi_r|}\mF f(\xi),\qquad f\in L^2(\mathbb{R}^d,dx)$$ and $$\mF(L_r^{1/2}(L_1+\cdots+ L_d)^{-1/2}f)(\xi)=\frac{|\xi_r|}{|\xi|}\mF f(\xi),\qquad f\in L^2(\mathbb{R}^d,dx),$$ respectively. Since the multi-dimensional Riesz transform $R_r$ is given by the Fourier multiplier $$\mF(R_r f)(\xi)=\frac{\xi_r}{|\xi|}\mF f,\qquad f\in L^2,$$
we see that, for $r=1,\ldots,d,$
$$R_r f=(\delta_r L_r^{-1/2}) (L_r^{1/2}(L_1+\cdots+ L_d)^{-1/2})f,\qquad f\in L^2(\mathbb{R}^d,dx). $$
In order to obtain the desired dimension free bounds, it suffices to note that $\delta_r L_r^{-1/2}$ is a constant times the Hilbert transform in the $x_r$ variable and to apply \eqref{eq:corschembound} with $j_r=1$.

\section[Riesz transforms for classical orthogonal expansions]{Riesz transforms for classical orthogonal expansions}
\label{sec:ort}
\numberwithin{equation}{section}
In this section Corollary \ref{cor:Scheme} is formalized for Riesz transforms connected with various, discrete or continuous, orthogonal expansions.

In order not to introduce a separate notation for each case of the orthogonal expansions, for precise definitions we kindly refer the reader to consult the list of examples in \cite[Section 7]{NSRiesz} (cases a)-h)) and \cite{Bet1} (case i)). In Theorem \ref{thm:RieszOrt} we consider first order multi-dimensional Riesz transforms $R_r$ in one of the following settings::
\begin{enumerate}[a)]
\item Hermite polynomial expansions $\{H_{k}\}_{k\in \mathbb{N}_0^d}$, see \cite[Section 7.1]{NSRiesz};
\item Laguerre polynomial expansions $\{L_{k}^{\alpha}\}_{k\in \mathbb{N}_0^d},$ $\alpha\in(-1,\infty)^d$, see \cite[Section 7.2]{NSRiesz};
\item Jacobi polynomial expansions $\{P_{k}^{\alpha,\beta}\}_{k\in \mathbb{N}_0^d},$ $\alpha,\beta\in(-1,\infty)^d$, see \cite[Section 7.3]{NSRiesz};
\item Hermite function expansions $\{h_{k}\}_{k\in \mathbb{N}_0^d}$, see \cite[Section 7.4]{NSRiesz};
\item Laguerre expansions of Hermite type $\{\varphi_{k}^{\alpha}\}_{k\in \mathbb{N}_0^d},$ $\alpha\in(-1,\infty)^d$, see \cite[Section 7.5]{NSRiesz};
\item Laguerre expansions of convolution type $\{\ell_{k}^{\alpha}\}_{k\in \mathbb{N}_0^d},$ $\alpha\in(-1,\infty)^d$, see \cite[Section 7.6]{NSRiesz};
\item Jacobi function expansions $\{\phi_{k}^{\alpha,\beta}\}_{k\in \mathbb{N}_0^d},$ $\alpha,\beta \in(-1,\infty)^d$, see \cite[Section 7.7]{NSRiesz};
\item Fourier-Bessel expansions $\{\psi_{k}^{v}\}_{k\in \mathbb{N}^d},$ $v\in(-1,\infty)^d,$ with the Lebesgue measure, see \cite[Section 7.8]{NSRiesz} (there $\nu$ is used in place of $v$);
\item Bessel expansions (multi-dimensional Hankel transform), with the parameter $\alpha\in (-1/2,\infty)^d,$ see \cite[pp. 946-947]{Bet1} (there $\lambda$ is used instead of $\alpha$).
\end{enumerate}
As showed in \cite{NSRiesz}, in the cases a)-h) each of the operators $R_r,$ $r=1,\ldots,d,$ is bounded on $L^2(X,\nu)$ with an appropriate $(X,\nu)$ (in the terminology of \cite{NSRiesz} $\mu$ is used instead of $\nu$). By \cite[Theorem 1.3]{Bet1} the same is true in case i), with $(X,\nu)=((0,\infty)^d,\otimes_{r=1}^d x_r^{2\alpha_r}),$ $\alpha_r>-1/2,$ $r=1,\ldots,d.$

Moreover, in all the settings a)-i) the operators $R_r,$ $r=1,\ldots,d,$ are in fact bounded on $L^p:=L^p(X,\nu),$ $1<p<\infty.$ Additionally, in the settings a) - e), it is known that their norms as operators on $L^p$ are independent of the dimension $d,$ and of their respective parameters (appropriately restricted in some cases). In the setting a) this is a consequence of \cite{Mey1}, see also \cite{Gut1} and \cite{Pis1}. For the dimension free bounds in the settings b) and c), see \cite{No1}, and \cite{NSj2}, respectively. In the setting d) the dimension free boundedness is proved in \cite{HRST}, while in the setting e) in \cite{StWr}.

Theorem \ref{thm:RieszOrt} below is a fairly general result giving dimension free boundedness for the norms on $L^p$ of first order Riesz transforms connected with the orthogonal expansions listed in items a)-i)\footnote{Recently Forzani et al.\ obtained dimension free $L^p$ estimates for Riesz transforms connected with polynomial expansions, see the presentation \cite{SasForSc}. Their results overlap with our Theorem \ref{thm:RieszOrt} in the settings a)-c).}. In fact the theorem is a tool for reducing the problem of proving dimensionless and parameterless bounds to proving parameterless bounds in one dimension. Note that, in the cases f) - i), it seems that the obtained results are new. Additionally, in the cases b) and e), we improve the range of the admitted parameters.

In the statement of Theorem \ref{thm:RieszOrt} by $\diamond_{r}$ we denote the $r$-th parameter in one of the cases b)-c), e)-i). In case i) we write $\alpha$ instead of $\la$ that was used in \cite{Bet1}. By convention, in the cases a) and d), the symbol $C(p,\diamond_r)$ denotes a constant depending only on $p.$ Recall the abbreviation $\|\cdot\|_{p\to p}=\|\cdot\|_{L^p\to L^p}.$
\begin{thm}
\label{thm:RieszOrt} Fix $r\in\mathbb{N},$ and let $R_r$ be the $r$-th first order $d$-dimensional, $d \geq r,$ Riesz transform in one of the settings a)-i), as defined in \cite[Sections 7.1-7.8]{NSRiesz} (items a)-h)) or \cite[p.946]{Bet1} (item i)). Then, $R_r$ is bounded on $L^p$ and
\begin{equation}
\label{eq:RieszOrtbound}
\|R_r\|_{p\to p}\leq C(p,\diamond_r),\qquad 1<p<\infty,
\end{equation}
where the constant $C(p,\diamond_r)$ is independent of both the dimension $d$ and all the other parameters $\diamond_{r'},$ $r'\neq r.$ The bound \eqref{eq:RieszOrtbound} in the following settings below holds for:
\begin{enumerate}[a)]
\setcounter{enumi}{1}
\item parameter $\alpha\in(-1,\infty)^d,$ the constant $C(p,\alpha_r)$ is independent of $\alpha_r;$
\item parameters $\alpha,\beta\in[-1/2,\infty)^d,$ the constant $C(p,(\alpha_r,\beta_r))$ is independent of $\alpha_r,\beta_r;$
\setcounter{enumi}{4}
\item parameter $\alpha\in (\{-1/2\}\cup [1/2,\infty))^d,$ the constant $C(p,\alpha_r)$ is independent of $\alpha_r;$
\item parameter $\alpha\in (-1,\infty)^d;$
\item parameters $\alpha,\beta\in(\{-1/2\}\cup [1/2,\infty))^d;$
\item parameter $\nu\in (\{-1/2\}\cup [1/2,\infty))^d;$
\item parameter $\alpha \in [0,\infty)^d,$ the constant $C(p,\alpha_r)$ is independent of $\alpha_r.$
\end{enumerate}
\end{thm}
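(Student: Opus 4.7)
The overall plan is to apply Corollary \ref{cor:Scheme} with $j_r=1$, case by case, to each of the orthogonal expansion settings a)--i). In every case the relevant operators $L_r$ and $\delta_r$ act on the $r$-th variable only, so $L=(L_1,\ldots,L_d)$ is automatically a system of strongly commuting non-negative self-adjoint operators on $L^2(X,\nu)$. The argument then reduces to (i) verifying the hypotheses \eqref{contra} and \eqref{noatomatzero} (or explaining how to use the variant mentioned in Remark 1), and (ii) establishing the corresponding one-dimensional parameter-free Riesz transform bound.

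For step (i), I would check \eqref{contra} by invoking the well-known contractivity of the heat semigroup $e^{-tL_r}$ in each of the listed settings; in the polynomial cases a)--c) this is classical (the Ornstein--Uhlenbeck, Laguerre and Jacobi semigroups are Markovian on the respective weighted $L^p$ spaces), and in the function-type cases d)--i) the heat kernels are non-negative integral kernels with the correct total mass. The atomlessness condition \eqref{noatomatzero} fails precisely when $L_r$ has $0$ as an eigenvalue; this happens in the polynomial cases a), b), c), and in case g) (the constant function is an eigenvector with eigenvalue $0$). In those cases I would work on the orthogonal complement of the kernel, define $R_r$ with the appropriate projection as indicated in Remark 1, and note that the semigroup restricted to this subspace still satisfies \eqref{contra}; in the other cases $0$ is not an eigenvalue (Fourier--Bessel, Hankel, Hermite function, Laguerre function of Hermite type, etc.), so \eqref{noatomatzero} holds directly.

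For step (ii), the content of \eqref{eq:corschembound} with $j_r=1$ is that it suffices to prove a parameter-free $L^p$ bound for the one-dimensional Riesz transform $\delta_r L_r^{-1/2}$ on $L^p(X_r,\nu_r)$. For the cases a), b), c), d), e) with the parameter ranges listed in the theorem, such parameter-free one-dimensional bounds are available in the literature already cited in the introduction: \cite{Mey1,Pis1,Gut1} for a), \cite{No1} for b), \cite{NSj2} for c), \cite{HRST} for d), and \cite{StWr} for e). For the new cases f)--i), I would reduce to a one-dimensional Riesz transform analysis by means of the one-dimensional integral kernel representations available in \cite{NSRiesz,Bet1} and apply standard Calder\'on--Zygmund theory with constants controlled independently of the parameter in the stated ranges. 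Dimension freedom is then automatic from the constant $C_{p,j_r}$ in \eqref{eq:corschembound} being independent of $d$, and the independence of the other parameters $\diamond_{r'}$ with $r'\neq r$ follows because these parameters enter only through the operators $L_{r'}\otimes I_{(r')}$ with $r'\neq r$, to which Theorem \ref{thm:genRiesz} is applied, and the constant there does not depend on the particular operators involved.

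The main obstacle, and the part requiring the most care, is the verification of the parameter-free one-dimensional Riesz transform bounds in the new cases f)--i) and the enlarged parameter ranges in b) and e). In particular, for the Bessel/Hankel setting i) and the Fourier--Bessel setting h), one needs to handle the range $\alpha\in(\{-1/2\}\cup[1/2,\infty))$ (or $\alpha\in[0,\infty)$ in case i)) uniformly in $\alpha$, which is a genuinely one-dimensional harmonic analysis question. Once this uniform one-dimensional boundedness is in hand, the passage to the dimension- and remaining-parameter-free multidimensional statement is entirely supplied by Corollary \ref{cor:Scheme}.
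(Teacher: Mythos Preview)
Your overall strategy matches the paper's: decompose $R_r=(\delta_r L_r^{-1/2})(L_r^{1/2}(\sum_s L_s)^{-1/2})$, quote a one-dimensional Riesz bound for the first factor, and invoke Theorem~\ref{thm:genRiesz} for the second. The paper carries this out in detail for cases a) and i) and merely lists references for the remaining one-dimensional bounds and for contractivity.

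There is, however, a genuine gap in your treatment of the cases where \eqref{noatomatzero} fails (a), b), c), and for some parameters g)). You propose to ``work on the orthogonal complement of the kernel'' and to ``note that the semigroup restricted to this subspace still satisfies \eqref{contra}.'' This does not go through: Theorem~\ref{thm:genRiesz} and Proposition~\ref{cor:genMarHinfdimindep} are formulated for operators on a full $L^2(X,\nu)$ that generate semigroups contractive on every $L^p(X,\nu)$. Restricting to $\Ran(\Pi_{0,r})$ gives you an abstract Hilbert subspace with no canonical $L^p$-scale, and composing with $\Pi_{0,r}$ on the original space destroys the contraction constant (for $p\neq2$, $\|\Pi_{0,r}\|_{p\to p}>1$ in general). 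The paper circumvents this with a different device: apply Theorem~\ref{thm:genRiesz} to the perturbed systems $L_\varepsilon=(L_1+\varepsilon,\ldots,L_d+\varepsilon)$, which satisfy both \eqref{contra} (since $e^{-t(L_r+\varepsilon)}=e^{-t\varepsilon}e^{-tL_r}$) and \eqref{noatomatzero}, obtain a bound on $(L_r+\varepsilon)^{1/2}(\sum_s L_s+d\varepsilon)^{-1/2}$ that is uniform in $\varepsilon$ and $d$, and then let $\varepsilon\to0^+$ using the spectral gap of $L_r$ together with $\Pi_{0,r}$ bounded on $L^p$. You also need the algebraic identity \eqref{eq:RieszLaPoldecom}, which uses $\delta_r(I-\Pi_{0,r})=0$ and $\Pi_{0,r}\Pi_0=\Pi_{0,r}$, to replace $\Pi_0$ by $\Pi_{0,r}$ in the decomposition; this step is not mentioned in your outline.

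Two smaller points. First, the one-dimensional Riesz bounds in cases f)--i) are already in the literature (the paper cites \cite{NSz1}, \cite{Muck3}, \cite{CiSt1}, \cite{Vil1}); you do not need to redo a Calder\'on--Zygmund analysis. Second, note that the theorem only asserts parameter-\emph{independent} constants in b), c), e), i); in f), g), h) the constant $C(p,\diamond_r)$ is allowed to depend on $\diamond_r$, so your phrase ``constants controlled independently of the parameter'' overstates what is needed there.
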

\begin{remark}
Analogous (dimension free) estimates are also true for specific higher order Riesz transforms $\delta_r^{j_r}(\sum_{r=1}^d L_r)^{-1/2}$ in the settings a)-i). The proof is similar to that of Theorem \ref{sec:ort} and uses Corollary \ref{cor:Scheme} or its akin. Note that, since $\delta_r$ may not commute with $L_r$ we can not allow more general Riesz transforms in Corollary \ref{cor:Scheme}. Indeed, in all the cases a)-i), the operator $\delta_r$ does not commute with $L_r$.
\end{remark}
\begin{proof}
We focus on proving two particular model cases of the theorem, namely a) and i). The proofs in the other settings are similar to the proof in case a). Whenever we are in one of the settings a)-i), the symbol $L=(L_1,\ldots,L_d)$ denotes the system of operators considered in this setting. Note that each $L_r,$ $r=1,\ldots,d,$ is self-adjoint and non-negative on $L^2(X_r,\nu_r).$ It may happen however that some of the operators $L_r$ do not satisfy one or both of the conditions \eqref{contra} or \eqref{noatomatzero}. Throughout the proof we do not distinguish between $L_r$ and $L_r\otimes I_{(r)}=I\otimes\cdots \otimes L_r\otimes \cdots \otimes I.$

Since the operators $L_r,$ $r=1,\ldots,d,$ act on separate variables, they (or rather their tensor product versions) commute strongly. Hence, by the multivariate spectral theorem, we can define the projections $\Pi_{0}=\chi_{\{\la_1+\cdots+\la_d>0\}}(L)$ and $\Pi_{0,r}=\chi_{\{\la_r>0\}}(L),$ $r=1,\ldots,d,$ via \eqref{m(L)def}. Note that in some of the cases a)-i) or for some values of the parameters the operators $\Pi_0$ or $\Pi_{0,r}$ are trivial, i.e.\ equal to the identity operator.

First we prove formally case a) of Theorem \ref{thm:RieszOrt}. The proof is a minor variation on Corollary \ref{cor:Scheme}. The notation used here is the one introduced in \cite[p. 689]{NSRiesz}.

In particular $L_r=\mL_{r}$ are the one-dimensional Ornstein-Uhlenbeck operators $$\mL_r=-\partial_r^2+2 x_r \partial_r,\qquad r=1,\ldots,d.$$ These operators are symmetric on $C_c^{\infty}(\mathbb{R})$ with respect to the inner product on $L^2(X_r,\nu_r),$ where $(X_r,\nu_r)=(\mathbb{R},\gamma_r\,dx_r)$ with $\gamma_r(x_r)=\exp(-x_r^2).$ Moreover, the one-dimensional $L^2(X_r,\nu_r)$ normalized Hermite polynomials $\{\bnH_{k_r}\}_{k_r\in \mathbb{N}_0}$ are eigenfunctions of $L_r$ corresponding to the eigenvalues $2k_r,$ i.e.\ $L_r\bnH_{k_r}=2k_r\bnH_{k_r},$ $k_r\in\mathbb{N}_0.$ Then, for each $r=1,\ldots,d,$ a standard procedure allows us to consider a self-adjoint extension of $L_r$ (still denoted by the same symbol) defined by
$$L_rf=\sum_{k_r=0}^{\infty}2k_r\langle f, \bnH_{k_r}\rangle_{L^2(X_r,\nu_r)} \bnH_{k_r}$$
on the natural domain
$$\Dom(L_r)=\bigg\{f\in L^2(X_r,\nu_r)\colon \sum_{k_r=0}^{\infty}k_r^2\,|\langle f, \bnH_{k_r}\rangle_{L^2(X_r,\nu_r)}|^2<\infty\bigg\}.$$

 Let $\bnH_k=\bnH_{k_1}\otimes\cdots\otimes \bnH_{k_d},$ $k\in \mathbb{N}_0^d,$ be the $L^2$ normalized multi-dimensional Hermite polynomials. In case a), for $m\colon(2\mathbb{N}_0)^d\to \mathbb{C},$ the joint spectral multiplier provided by \eqref{m(L)def} is
\begin{equation}
\label{m(L)defHerm}
m(L_1,\ldots,L_d)f=\sum_{k\in\mathbb{N}^d_0}m(2k_1,\ldots,2k_d)\langle f, \bnH_k\rangle_{L^2} \bnH_k,
\end{equation}
with domain
$$\Dom(m(L_1,\ldots,L_d))=\bigg\{f\in L^2\colon \sum_{k\in \mathbb{N}^d_0}|m(2k_1,\ldots,2k_d)|^2\,|\langle f, \bnH_{k}\rangle_{L^2}|^2<\infty\bigg\}.$$

The operator $\delta_r=\partial_r$ from \cite[p. 689]{NSRiesz} acts on $\bnH_k,$ $k\in\mathbb{N}^d_0,$ by $$\delta_r(\bnH_k)=\sqrt{2k_r}\bnH_{k-e_r},$$
 where $e_r$ is the $r$-th coordinate vector and, by convention, $\bnH_{k-e_r}=0,$ if $k_r=0.$ Then the ($d$-dimensional) Riesz transform $R_r$ and the one-dimensional Riesz transform are well defined on finite linear combinations of Hermite polynomials by $R_r=\delta_r (\sum_{r=1}^d L_r)^{-1/2}\Pi_{0}$ and $\delta_r L_r^{-1/2}\Pi_{0,r},$ respectively. Here the operators $(\sum_{r=1}^d L_r)^{-1/2}\Pi_{0}$ and $L_r^{-1/2}\Pi_{0,r}$ are given by \eqref{m(L)defHerm}, with the appropriate functions $m.$

Observe that the Riesz transform $R_r$ can be written as
\begin{equation}
\label{eq:RieszLaPoldecom}
R_r f=(\delta_rL_r^{-1/2}\Pi_{0,r})(L_r^{1/2}(L_1+\cdots+ L_d)^{-1/2}\Pi_{0,r})f,
\end{equation}
whenever $f$ is a finite linear combination of Hermite polynomials. To show \eqref{eq:RieszLaPoldecom}, first note that for such $f,$
\begin{equation*}
R_rf=(\delta_rL_r^{-1/2}\Pi_{0,r})(L_r^{1/2}(L_1+\cdots+ L_d)^{-1/2}\Pi_{0})f+\delta_r(I-\Pi_{0,r})((L_1+\cdots+ L_d)^{-1/2}\Pi_0).
\end{equation*}
Then, since $\delta_r$ vanishes on $\Ran(I-\Pi_{0,r})=\{(I-\Pi_{0,r})g\colon g\in L^2(X,\nu)\}$ and $\Pi_{0,r}\Pi_0=\Pi_{0,r},$ we obtain \eqref{eq:RieszLaPoldecom}.

In case a) the boundedness on $L^p(X_r,\nu_r)$ of the one-dimensional Riesz transform $\delta_rL_r^{-1/2}\Pi_{0,r}$ follows from \cite{Muck2}. Additionally, the operator $\Pi_{0,r}$ is also bounded on all $L^p(X_r,\nu_r),$ $1<p<\infty.$ Thus, since finite combinations of Hermite polynomials are dense in $L^p,$ $1<p<\infty,$ coming back to \eqref{eq:RieszLaPoldecom} we see that it suffices to prove the dimension free $L^p$ boundedness of the operator $(L_r^{1/2}(\sum_{r=1}^d L_r)^{-1/2}\Pi_{0,r}).$ It is known that the operators $L_r,$ $r=1,\ldots,d,$ satisfy \eqref{contra}, see \cite[Proposition 1.1. i)]{funccalOu}. However, we cannot directly apply Theorem \ref{thm:genRiesz}, since none of these operators satisfies \eqref{noatomatzero}.

To overcome this obstacle we consider the auxiliary systems $$L_{\varepsilon}=(L_1+\varepsilon,\ldots,L_d+\varepsilon),\qquad \varepsilon>0.$$ Using Theorem \ref{thm:genRiesz} for the systems $L_{\varepsilon}$ we obtain
\begin{equation}
\label{eq:RieszLaPolaux}
\|(L_r+\varepsilon)^{1/2}(L_1+\cdots+L_d+d\varepsilon)^{-1/2}\|_{p\to p}\leq C_{p}, \qquad 1<p<\infty,
\end{equation}
with a constant $C_p$ independent both of the dimension $d$ and $\varepsilon>0.$ Now, since $L_r$ has a discrete spectrum, the set $\sigma(L_r)\setminus\{0\}$ is separated from $0.$ Consequently, from the multivariate spectral theorem,
\begin{equation}\label{eq:RieszHelimit}L_r^{1/2}(L_1+\cdots +L_d)^{-1/2}\Pi_{0,r}=\lim_{\varepsilon\to 0^+}(L_r+\varepsilon)^{1/2}(L_1+\cdots+L_d+d\varepsilon)^{-1/2}\Pi_{0,r},\end{equation}
in the strong $L^2$ sense. Using the above equality together with \eqref{eq:RieszLaPolaux} and the fact that the operator $\Pi_{0,r}$ is bounded on $L^p(X_r,\nu_r),$ $1<p<\infty,$ we obtain the desired dimension free bound for $L_r^{1/2}(\sum_{r=1}^d L_r)^{-1/2}\Pi_{0,r}.$

We omit the proofs of cases b)-h) as they are all analogous to the proof in case a). In fact in some of these cases the proofs are simpler then in case a). That is because we can apply directly Corollary \ref{cor:Scheme}, without using the auxiliary systems $L_{\varepsilon}.$

Now we proceed to the proof of case i), which is a rather straightforward application of Corollary \ref{cor:Scheme}. The notation we use is the one from \cite{Bet1}, with the only difference being the use of $\alpha\in[0,\infty)^d$ (instead of $\la$) to denote the parameter. The one-dimensional Bessel operators $L_r,$ $r=1,\ldots,d,$ are initially defined on $C_c^{\infty}((0,\infty))$ by $$ L_r=-\partial_r^2-\frac{2\alpha_r}{x_r}\partial_r.$$ Each $L_r$ is non-negative and symmetric on $L^2(X_r,\nu_r),$ with $(X_r,\nu_r)=((0,\infty),x_r^{2\alpha_r}).$ In this case the self-adjoint extensions of $L_r$ can be described by the use of the (multi-dimensional, modified) Hankel transform.

The Hankel transform is defined by
\begin{equation*}
\Hla (f)(x)=\int_{(0,\infty)^d} f(\la) E_{x}(\la)\,d\nu(\la), \qquad f\in L^1,
\end{equation*}
where $$E_{x}(\la)=\prod_{k=1}^d (x_k \la_k)^{-\alpha_k+1/2}J_{\alpha_k-1/2}(x_k \la_k)=\prod_{k=1}^dE_{x_k}(\la_k).$$ Here $J_{\nu}$ is the Bessel function of the first kind of order $\nu,$ see \cite[Chapter 5]{Leb}.
It is well known that $\Hla$ extends to an isometry of $L^2$ which satisfies $\Hla^2=I.$ For a Borel measurable real-valued function $m:(0,\infty)^d\to \mathbb{C}$ the operator $m(L_1,\ldots,L_d)$ is the self-adjoint operator $m(L_1,\ldots,L_d)=\Hla(m\Hla f)$ defined on the domain $$\Dom(m(L_1,\ldots,L_d))=\left\{f\in L^2\colon m\Hla(f)\in L^2\right\}.$$

Let $$\mathcal{A}=\left\{f\in L^2 \colon \Hla(f)\in C_c^{\infty}((0,\infty)^d)\right\}.$$ Then $\mathcal{A}$ is a dense subspace of $L^2,$ which is invariant under the operators $L_r^{\pm 1/2}$ as well as $(\sum_{r=1}^d L_r)^{\pm 1/2}.$ In fact every $f\in \mathcal{A}$ is a restriction to $(0,\infty)^d$ of a Schwartz function on $\mathbb{R}^d,$ which is even in each variable, cf.\ \cite{St1}. Moreover, it can be verified that the formulae defining the multi-dimensional Riesz transform $R_r=\partial_r(\sum_{r=1}^{d} L_r)^{-1/2}$ and the one-dimensional Riesz transform $\partial_r L_r^{-1/2},$ see \cite[Theorem 1.3]{Bet1}, are also valid for $f\in \mathcal{A}.$ Hence, we deduce that
\begin{equation}
\label{eq:Bessdecom} R_r f=(\partial_r L_r^{-1/2})(L_r^{1/2}( L_1+\cdots +L_d)^{-1/2}f),\qquad f\in \mathcal{A}.
\end{equation}
Since both $\partial_r L_r^{-1/2}$ and $L_r^{1/2}( \sum_{r=1}^dL_r)^{-1/2}$ are bounded on $L^2,$ a density argument shows that \eqref{eq:Bessdecom} holds on $L^2.$
In case i) the parameter free bounds of the one-dimensional Riesz transforms $\partial_r L_r^{-1/2},$ $r=1,\ldots,d,$ on $L^p(X_r,\nu_r),$ $1<p<\infty,$ were proven in \cite{Vil1}. Note that here we restrict to $\alpha_r\geq 0.$ Moreover, it is well known that the operators $L_r,$ satisfy \eqref{contra} and \eqref{noatomatzero}. Thus, using Theorem \ref{thm:genRiesz}, case i) is proved.

We finish the proof by pointing out exemplary references, in the remaining cases b)-h), for the boundedness of appropriate one-dimensional Riesz transforms and for the $L^p$ contractivity of appropriate semigroups.

For the first of these topics the references are e.g.:\ \cite[Theorem 3b)]{Muck1} for case b) (with a bound independent of the parameter $\alpha_r\in(-1,\infty)$), \cite{NSj2} for case c) (the bound being independent of the parameters), \cite{ThanRieszHerm} for case d), \cite{NSz1} for case f), \cite[Theorem 1.14, Corollary 17.11]{Muck3} for case g), and \cite{CiSt1} for case h). In case e), a combination of the proof of \cite[Theorem 3.3]{NSRieszLagHerm} from \cite[Sections 5,7]{NSRieszLagHerm} (for small $\alpha_r$), and \cite[Theorem 3.1 and Theorem 5.1]{StWr} (for large $\alpha_r$), gives a bound independent of $\alpha_r\in \{-1/2\}\cup[1/2,\infty).$

For the $L^p$ contractivity property of appropriate semigroups the reader can consult e.g.: \cite{NScontr} for the cases b), e), and f); \cite[Section 2]{NSj1} for the cases c) and g); \cite{HRST} for case d). In case h) the $L^p$ contractivity follows from the Feynman-Kac formula.

In those references in which multi-dimensional expansions are considered, we only need to use the one-dimensional case $d=1.$

\end{proof}

\section[Riesz transforms on products of discrete groups]{Riesz transforms on products of discrete groups with polynomial volume growth}
\label{sec:disc}
We apply Corollary \ref{cor:Scheme} to the context introduced in the title of the present section.

Let $G$ be a discrete group. Assume that there is a finite set $U$ containing the identity and generating $G.$ We impose that $G$ has polynomial volume growth, i.e., there is $\alpha\in \mathbb{N}_0,$ such that
$
|U^n|\leq C n^{\alpha},
$ where $|F|$ denotes the counting measure (Haar measure) of $F\subset G.$

Fix a finitely supported symmetric probability measure $\mu$ on $G,$ such that $\supp \mu$ generates $G.$ Then the operator $$P f(x)=P_{\mu} f(x)=f*\mu(x)=\sum_{y\in G}\mu(x^{-1}y) f(y)=\sum_{y\in G}\mu(y) f(xy),$$
is a contraction on all $l^p(G),$ $1\leq p\leq \infty.$ Since $\mu$ is symmetric, $P$ is self adjoint on $l^2(G)$, thus $L=(I-P)$ is self-adjoint and non-negative on $l^2(G).$ Moreover, $L$ satisfies \eqref{contra}. Indeed, we have
$e^{-tL}=e^{-t}\sum_{n=0}^{\infty}\frac{P^n }{n!},$ so that
$$\|e^{-tL}\|_{l^p(G)\to l^p(G)}\leq e^{-t}\sum_{n=0}^{\infty}\frac{\|P\|^n_{l^p(G)\to l^p(G)}}{n!}\leq 1, \qquad 1\leq p\leq \infty.$$

Additionally, since $\supp \mu$ generates $G,$ it can be shown that, if $Pf=f$ for some $f\in l^2(G),$ then $f$ is a constant. For the sake of completeness we give a short proof of this observation\footnote{We thank Gian Maria Dall'Ara for showing us this argument.}. Since the measure $\mu$ is real-valued it suffices to focus on real-valued $f\in l^2(G).$ Then, either $f$ or $-f$ attains a maximum on $G.$ Assume the former holds (the proof in the latter case is analogous) and let $x\in G$ be such that $\sup_{y\in G}f(y)\leq f(x).$ Since $f(x)=Pf(x)=\sum_{y\in \supp \mu}\mu(y)f(xy),$ we have $f(x)=f(xy),$ for all $y\in \supp \mu.$ Next, an inductive argument gives $f(x)=f(xy^n),$ for all $y\in \supp \mu$ and $n\in \mathbb{N}.$ Thus, using the assumption that $\supp \mu$ generates $G,$ we obtain that $f$ is constant on $G.$

From the previous paragraph it follows that, in the case $|G|=\infty,$ if $Pf=f$ and $f\in l^2,$ then $f=0.$ Hence, if $G$ is infinite, then $L$ satisfies \eqref{noatomatzero}, i.e.\ $E_{L}(\{0\})=0.$

For fixed $g_0\in G$ denote $\partial_{g_0} f(x)=f(xg_0)-f(x).$ Then $\partial_{g_0}$ is a bounded operator on all $l^p(G),$ $1\leq p\leq \infty.$ Let $R$ be the discrete Riesz transform considered in \cite{AlexdisRiesz} (see also \cite{HebSC}). From \cite[Theorem 2.4]{AlexdisRiesz} (or \cite[Section 8]{HebSC}) it follows that $R$ is bounded on all $l^p(G),$ $1<p<\infty$ and of weak type $(1,1).$ Note that, if $G$ is infinite, then $E_{L}(\{0\})=0$ and the formula $R=\partial_{g_0}L^{-1/2}=\partial_{g_0}(I-P)^{-1/2}$ holds on a dense subset of $l^2.$ If $|G|=K,$ for some $K\in\mathbb{N}$, then, for all $f\in l^2(G)$ we have $R=\partial_{g_0}L^{-1/2}\pi_0,$ where $\pi_0$ is the projection onto the orthogonal complement of the constants given by\ $\pi_0f=f-|G|^{-1}\sum_{y\in G}f(y).$

Now we consider multi-dimensional Riesz transforms on direct products $G^d.$ Till the end of this section by $l^p,$ $\|\cdot\|_p,$ and $\|\cdot\|_{p\to p},$ we mean $l^p(G^d),$ $\|\cdot\|_{l^p(G^d)},$ and $\|\cdot\|_{l^p(G^d)\to l^p(G^d)},$ respectively.

Let $P_r=P\otimes I_{(r)}$ be given by \eqref{eq:tensnot}. Set $L_r=L\otimes I_{(r)}=(I-P_r)$ and, in the case of finite $G,$ denote \begin{equation}\label{sec:disc,eq:Pi0def}\Pi_0f=f-\frac{1}{|G|^d}\sum_{y=(y_1,\ldots,y_d)\in G^d}f(y).\end{equation} The $d$-dimensional Riesz transform $R_r$ is then defined by \begin{equation}\label{sec:disc,eq:Rieszmultdef}
R_r=\left\{ \begin{array}{lr}
 (\partial_{g_0}\otimes I_{(r)})(L_1+\cdots +L_d)^{-1/2}, &\mbox{if $|G|=\infty$}, \\
 (\partial_{g_0}\otimes I_{(r)})(L_1+\cdots +L_d)^{-1/2}\Pi_0, &\mbox{if $|G|<\infty.$}
       \end{array} \right.
\end{equation}
Recall that the measure $\mu$ in the definition of $P_r=P\otimes I_{(r)}=P_{\mu}\otimes I_{(r)},$ $r=1,\ldots,d,$ is a symmetric probability measure, such that $\supp \mu$ is finite and generates $G$.

By using Corollary \ref{cor:Scheme} (or its variation) we prove the following.
\begin{thm}
\label{thm:Rieszdiscgen}
Let $G$ be a discrete group of polynomial volume growth. Then the $d$-dimensional Riesz transforms $R_r,$ given by \eqref{sec:disc,eq:Rieszmultdef}, are bounded on all $l^p,$ $1<p<\infty.$ Moreover, for each $r=1,\ldots,d,$
\begin{equation}
\|R_r \|_{p\to p}\leq C_p ,\qquad 1<p<\infty,
\label{sec:disc,eq:Rieszdiscgen}
\end{equation}
where the constant $C_p$ is independent of $d.$ Consequently,
\begin{align}\label{sec:disc,eq:Rieszdiscgenvectp<2}
&\bigg\|\left(\sum_{r=1}^d|R_r f|^2\right)^{1/2}\bigg\|_{p}\leq C_p\, d\,\|f\|_{p},\qquad 1<p<2,\\
&\bigg\|\left(\sum_{r=1}^d|R_r f|^2\right)^{1/2}\bigg\|_{p}\leq C_p\, \sqrt{d}\, \|f\|_{p},\qquad 2<p<\infty. \label{sec:disc,eq:Rieszdiscgenvectp>2}
\end{align}
\end{thm}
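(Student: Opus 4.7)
The plan is to deduce the scalar estimate \eqref{sec:disc,eq:Rieszdiscgen} directly from Corollary \ref{cor:Scheme} (or, when $|G|<\infty$, from the projection-adjusted variant mentioned in Remark 1), applied with $\delta_r = \partial_{g_0}\otimes I_{(r)}$ and $j_r=1$, and then to extract the two vector-valued inequalities from \eqref{sec:disc,eq:Rieszdiscgen} by purely elementary means. All three hypotheses of the corollary have essentially been assembled in the paragraphs preceding the theorem: contractivity \eqref{contra} of $e^{-tL}$ on every $l^p(G)$ via the power series expansion in $P$; atomlessness $E_L(\{0\})=0$ when $|G|=\infty$, since the maximum-principle argument forces any $l^2$-function fixed by $P$ to be constant and hence zero; and $l^p(G)$ boundedness of the one-dimensional Riesz transform $\partial_{g_0}L^{-1/2}$ (or $\partial_{g_0}L^{-1/2}\pi_0$ in the finite case) by \cite[Theorem 2.4]{AlexdisRiesz}.

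When $|G|=\infty$, the system $(L_1,\ldots,L_d)$ fits verbatim into Corollary \ref{cor:Scheme}, so the factorisation $R_r = (\partial_{g_0}L^{-1/2}\otimes I_{(r)})\circ L_r^{1/2}(L_1+\cdots+L_d)^{-1/2}$ combined with \eqref{eq:corschembound} and \eqref{eq:tensnormeq} yields \eqref{sec:disc,eq:Rieszdiscgen} with a constant independent of $d$. When $|G|<\infty$, the constants belong to the kernel of $L$ and \eqref{noatomatzero} fails, so I would mimic the argument used in case a) of the proof of Theorem \ref{thm:RieszOrt}: first, using $\Pi_{0,r}\Pi_0 = \Pi_{0,r}$ and the vanishing of $\partial_{g_0}$ on functions constant in the $r$-th variable, establish
\begin{equation*}
R_r = \bigl((\partial_{g_0}L^{-1/2}\pi_0)\otimes I_{(r)}\bigr)\circ\bigl(L_r^{1/2}(L_1+\cdots+L_d)^{-1/2}\Pi_{0,r}\bigr);
\end{equation*}
then apply Theorem \ref{thm:genRiesz} to the perturbed systems $L_{\varepsilon}=(L_1+\varepsilon,\ldots,L_d+\varepsilon)$, which satisfy both \eqref{contra} and \eqref{noatomatzero} for every $\varepsilon>0$, to obtain
\begin{equation*}
\|(L_r+\varepsilon)^{1/2}(L_1+\cdots+L_d+d\varepsilon)^{-1/2}\|_{p\to p}\leq C_p
\end{equation*}
uniformly in $\varepsilon$ and $d$; finally, pass to the strong $l^2$-limit as $\varepsilon\to 0^+$ on $\Ran \Pi_{0,r}$, which is legitimate because the spectrum of $L_r$ on the range of $\pi_0$ is discrete and separated from zero.

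The vector-valued bounds are then a formality. For $1<p<2$ I would use the pointwise estimate $\bigl(\sum_r|R_r f|^2\bigr)^{1/2}\leq \sum_r|R_r f|$ together with Minkowski's inequality in $L^p$ and \eqref{sec:disc,eq:Rieszdiscgen}, which produces the factor $d$. For $2<p<\infty$ I would use the power-mean inequality $\bigl(\sum_r a_r^2\bigr)^{1/2}\leq d^{1/2-1/p}\bigl(\sum_r a_r^p\bigr)^{1/p}$, followed by taking $L^p$ norms and \eqref{sec:disc,eq:Rieszdiscgen}, which produces the factor $\sqrt d$. The main obstacle in the whole plan is the finite-group case, where the perturbation-and-limit step must be executed with some care so that the $l^p$ limits match the desired operator on $\Ran \Pi_0$; everything else is a faithful transcription of arguments already present in Sections \ref{sec:Riesz} and \ref{sec:ort}.
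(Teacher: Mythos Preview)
Your proposal is correct and follows essentially the same route as the paper: the scalar bound via the factorisation and Theorem~\ref{thm:genRiesz} (with the $L_\varepsilon$-perturbation trick in the finite case) matches the paper's argument verbatim, and the vector bounds are derived by the same elementary inequalities. The only cosmetic difference is that for $2<p<\infty$ the paper uses Minkowski's integral inequality in $L^{p/2}$ to reach $(\sum_r\|R_r f\|_p^2)^{1/2}$ directly, whereas you pass through the pointwise $\ell^2$--$\ell^p$ comparison; both routes yield the same $\sqrt{d}$ factor.
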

\begin{remark}
In \cite{BaRus1} and \cite{Rus1} Badr and Russ studied discrete Riesz transforms on graphs. Applying Corollary \ref{cor:Scheme}, Theorem \ref{thm:Rieszdiscgen} can be generalized to products of the graphs studied in \cite{BaRus1} and \cite{Rus1}.
\end{remark}
\begin{proof}[Proof of Theorem \ref{thm:Rieszdiscgen}]
The proof of \eqref{sec:disc,eq:Rieszdiscgen} is just another formalization of Corollary \ref{cor:Scheme} (or its variants).

We start with showing \eqref{sec:disc,eq:Rieszdiscgen} in the case $|G|=\infty.$ The formula
\begin{equation}
\label{sec:disc,eq:RieszdiscGinf}
R_r f=\big((\partial_{g_0}\otimes I_{(r)})L_r^{-1/2}\big)\big(L_r^{1/2}(L_1+\cdots+ L_d)^{-1/2}\big)f\end{equation}
holds on $\Dom(L_1+\cdots +L_d),$ which is a dense subset of $l^2.$ Then, the boundedness on $l^2$ of $R\otimes I_{(r)}=(\partial_{g_0}\otimes I_{(r)})L_r^{-1/2}$ and $L_r^{1/2}(\sum_{r=1}^d L_r)^{-1/2}$ implies that \eqref{sec:disc,eq:RieszdiscGinf} is true on $l^2.$ Now
it suffices to use the boundedness on $l^p$ of the one-dimensional Riesz transform $R\otimes I_{(r)}$, see \cite[Theorem 2.4]{AlexdisRiesz}, together with Theorem \ref{thm:genRiesz}. Note that $L$ satisfies \eqref{contra} and \eqref{noatomatzero}, hence the same is true for the operators $L_r,$ $r=1,\ldots,d.$

To obtain \eqref{sec:disc,eq:Rieszdiscgen} in the case $|G|<\infty,$ we proceed similarly as in the proof of case a) of Theorem \ref{thm:RieszOrt}. Note that here all the formulae are valid on all of $l^2.$ Denoting $$\Pi_{0,r}f(x)=(\pi_0\otimes I_{(r)})f(x)=f(x)-\frac{1}{|G|}\sum_{y_r\in G}f(x_1,\ldots,x_{r-1},y_r,x_{r+1},\ldots,x_d),$$ we see that $\Pi_{0,r}\Pi_0=\Pi_0\Pi_{0,r}=\Pi_{0,r}.$ Since $(\partial_{g_0}\otimes I_{(r)})(I-\Pi_{0,r})=0,$ we rewrite $R_r$ as
\begin{align*}R_r&=\big((\partial_{g_0}\otimes I_{(r)})L_r^{-1/2}\Pi_{0,r}\big)\big(L_r^{1/2}(L_1+\cdots + L_d)^{-1/2}\Pi_{0,r}\big)\\&=(R\otimes I_{(r)})(L_r^{1/2}(L_1+\cdots + L_d)^{-1/2}\Pi_{0,r}).\end{align*}
Using the boundedness of  $R\otimes I_{(r)}$ on $l^p$ we are left with showing that the operator $L_r^{1/2}(\sum_{r=1}^d L_r)^{-1/2}\Pi_{0,r}$ is bounded on $l^p,$ uniformly in $d.$ This can be done exactly as in in the proof of case a) of Theorem \ref{thm:RieszOrt}. Namely, we apply Theorem \ref{thm:genRiesz} to the auxiliary systems $L_{\varepsilon}=(L_1+\varepsilon,\ldots,L_d+\varepsilon),$ to get a uniform in $\varepsilon>0$ and $d$ bound for the $l^p$ norms of the operators $(L_r+\varepsilon)^{1/2}(\sum_{r=1}^d L_r+d\varepsilon)^{-1/2}.$ Using the identity
$$L_r^{1/2}(L_1+\cdots +L_d)^{-1/2}\Pi_{0,r}=\lim_{\varepsilon\to 0^+}(L_r+\varepsilon)^{1/2}(L_1+\cdots+L_d+d\varepsilon)^{-1/2}\Pi_{0,r},$$
cf.\ \eqref{eq:RieszHelimit},
together with the dimension free $l^p$ boundedness of $\Pi_{0,r},$ we thus obtain the desired dimension free boundedness of $L_r^{1/2}(\sum_{r=1}^d L_r)^{-1/2}\Pi_{0,r}.$

Now we focus on proving \eqref{sec:disc,eq:Rieszdiscgenvectp<2} and \eqref{sec:disc,eq:Rieszdiscgenvectp>2} . The former inequality is a simple consequence of the fact that the $l^2$ norm of $(R_1,\ldots,R_d)$ is smaller than its $l^1$ norm. To prove \eqref{sec:disc,eq:Rieszdiscgenvectp>2} we use Minkowski's integral inequality (first inequality below, note that here we need $p/2>1$) together with \eqref{sec:disc,eq:Rieszdiscgen} (second inequality below), obtaining
\begin{equation*}
\bigg\|\left(\sum_{r=1}^d|R_r f|^2\right)^{1/2}\bigg\|_{p}=\bigg\|\sum_{r=1}^d|R_r f|^2\bigg\|^{1/2}_{p/2}\leq \left(\sum_{r=1}^d \|R_r f\|^2_{p}\right)^{1/2}\leq C_p\, \sqrt{d}\, \|f\|_{p}.
\end{equation*}
\end{proof}

We finish this section by showing how Theorem \ref{thm:Rieszdiscgen} can be used to prove a version of \cite[Theorem 2.8]{Lu_Piqu1} by Lust-Piquard, that applies to all cyclic groups. Till the end of this section we assume that $G$ is a cyclic group, that is, there exists $g_0\in G$ such that $\{g_0,g_0^{-1}\}$ generates $G.$
In this case $G$ is abelian and isomorphic to either $(\mathbb{Z},+)$ or $\mathbb{Z}_K=(\{0,\ldots,K-1\},+_{K}),$ where $+_{K}$ denotes addition modulo $K.$ Note that the results of \cite{Lu_Piqu1} include only the cases $G\cong \mathbb{Z}_3,$ $G\cong \mathbb{Z}_4,$ $G\cong \mathbb{Z},$ whereas the case $G\cong \mathbb{Z}_2$ is studied in \cite{Lu_Piqu2}.

Defining $\mu=\mu_{g_0}=(\delta_{g_0}+\delta_{g_0^{-1}})/2,$ we see that $\mu$ is a symmetric probability measure with $\supp \mu=\{g_0,g_0^{-1}\}$ generating $G.$ Moreover, \begin{equation*} \partial_{g_0}\partial_{g_0}^{*}=\partial_{g_0}^*\partial_{g_0}=2(I-P),\end{equation*} where, as we recall, $Pf=P_{\mu}f=\mu*f.$ Consequently, we have
\begin{equation}\label{sec:disc,eq:connect Gen-Lus}
\frac{1}{\sqrt{2}}R_r=\left\{ \begin{array}{lr}
 (\partial_{g_0}\otimes I_{(r)})\big(\sum_{r=1}^d (\partial_{g_0}\otimes I_{(r)})(\partial_{g_0}\otimes I_{(r)})^{*}\big)^{-1/2}, &\mbox{if $|G|=\infty$}, \\
(\partial_{g_0}\otimes I_{(r)})\big(\sum_{r=1}^d (\partial_{g_0}\otimes I_{(r)})(\partial_{g_0}\otimes I_{(r)})^{*}\big)^{-1/2}\Pi_0, &\mbox{if $|G|<\infty,$}
       \end{array} \right.
\end{equation}
with $\Pi_0$ given by \eqref{sec:disc,eq:Pi0def}. Thus, for the specific choice of $\mu=\mu_{g_0},$ our Riesz transform $R_r$ coincides with $\sqrt{2}$ times the Riesz transform considered in \cite[p.\ 307]{Lu_Piqu1}. Since, the group $G$ clearly has polynomial volume growth, we obtain the following corollary of Theorem \ref{thm:Rieszdiscgen}, which is a generalization of \cite[Theorem 2.8]{Lu_Piqu1} to all cyclic groups.
\begin{cor}
\label{cor:RieszgenLu_Piqu}
Let $G$ be a cyclic group. Then the discrete Riesz transforms $2^{-1/2}R_r$ considered in \cite{Lu_Piqu1} and given by \eqref{sec:disc,eq:connect Gen-Lus}, satisfy \eqref{sec:disc,eq:Rieszdiscgen}, \eqref{sec:disc,eq:Rieszdiscgenvectp<2}, and \eqref{sec:disc,eq:Rieszdiscgenvectp>2}.
\end{cor}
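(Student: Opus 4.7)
The plan is to observe that Corollary \ref{cor:RieszgenLu_Piqu} is essentially a direct specialization of Theorem \ref{thm:Rieszdiscgen} to a particular choice of the measure $\mu$. Nothing further beyond what is already in place in the paper needs to be proved; the task is just to verify that the hypotheses of Theorem \ref{thm:Rieszdiscgen} are met in the present setting and that the operator identity \eqref{sec:disc,eq:connect Gen-Lus} actually holds.

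First I would check that a cyclic group $G$ (finite or infinite) falls within the hypotheses of Theorem \ref{thm:Rieszdiscgen}. Since $G$ is generated by $\{g_0,g_0^{-1}\}$ for some $g_0\in G$, the set $U=\{e,g_0,g_0^{-1}\}$ is finite, contains the identity, and generates $G$. A cyclic group is either isomorphic to $\mathbb{Z}$, in which case $|U^n|\le 2n+1$, or to $\mathbb{Z}_K$, in which case $|U^n|\le K$ is uniformly bounded; in both situations $|U^n|\le Cn^\alpha$ for a suitable $\alpha\in\mathbb{N}_0$, so $G$ has polynomial volume growth. Next, take $\mu=\mu_{g_0}=(\delta_{g_0}+\delta_{g_0^{-1}})/2$; this is a symmetric probability measure with finite support generating $G$, so it is a legitimate choice for the measure entering the definition of $P$ in Section \ref{sec:disc}.

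Second, I would verify the identity \eqref{sec:disc,eq:connect Gen-Lus}, which is where the specific choice of $\mu_{g_0}$ enters decisively. A short direct computation on any $f\in l^2(G)$ shows
\begin{equation*}
\partial_{g_0}^{*}f(x)=f(xg_0^{-1})-f(x),
\end{equation*}
so that $\partial_{g_0}\partial_{g_0}^{*}f(x)=2f(x)-f(xg_0)-f(xg_0^{-1})=2(I-P_{\mu_{g_0}})f(x)$, and similarly for $\partial_{g_0}^{*}\partial_{g_0}$. Tensoring with $I_{(r)}$ this yields $(\partial_{g_0}\otimes I_{(r)})(\partial_{g_0}\otimes I_{(r)})^{*}=2L_r$, and therefore
\begin{equation*}
\sum_{r=1}^d (\partial_{g_0}\otimes I_{(r)})(\partial_{g_0}\otimes I_{(r)})^{*}=2(L_1+\cdots+L_d).
\end{equation*}
Plugging this into \eqref{sec:disc,eq:Rieszmultdef} gives precisely \eqref{sec:disc,eq:connect Gen-Lus}, which identifies $2^{-1/2}R_r$ with the Riesz transform of \cite[p.~307]{Lu_Piqu1}.

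Finally, since all hypotheses of Theorem \ref{thm:Rieszdiscgen} are met, inequalities \eqref{sec:disc,eq:Rieszdiscgen}, \eqref{sec:disc,eq:Rieszdiscgenvectp<2}, and \eqref{sec:disc,eq:Rieszdiscgenvectp>2} apply to $R_r$, and therefore (up to the harmless factor $2^{-1/2}$) to the Lust-Piquard Riesz transforms. There is essentially no obstacle here beyond bookkeeping: the only mildly nontrivial point is the verification of the operator identity $\partial_{g_0}\partial_{g_0}^{*}=2(I-P_{\mu_{g_0}})$, which is immediate from the definitions, and the check that every cyclic group is of polynomial volume growth, which is trivial. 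Thus the proof reduces to a single sentence invoking Theorem \ref{thm:Rieszdiscgen}.
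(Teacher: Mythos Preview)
Your proposal is correct and matches the paper's approach: the paper itself establishes the polynomial growth of cyclic groups, the admissibility of $\mu_{g_0}$, and the identity $\partial_{g_0}\partial_{g_0}^{*}=2(I-P)$ in the discussion immediately preceding the corollary, so that the corollary is stated as a direct consequence of Theorem~\ref{thm:Rieszdiscgen} without a separate proof. You have simply made explicit what the paper leaves to the preceding paragraph.
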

\begin{remark1}
Our method, contrary to the one used in \cite{Lu_Piqu1}, does not prove dimension free estimates for the vector of Riesz transforms. Note that, under the assumptions that $G$ is a locally compact abelian group, such that $g_0$ spans an infinite subgroup, by \cite[Theorem 2.8]{Lu_Piqu1} the inequality \eqref{sec:disc,eq:Rieszdiscgenvectp>2} holds with a constant independent of $d.$ However, the counterexample given in \cite[Proposition 2.9]{Lu_Piqu1}, shows that, even for $G=\mathbb{Z},$ the constant in \eqref{sec:disc,eq:Rieszdiscgenvectp<2} is dependent on $d.$
\end{remark1}
\begin{remark2}
The constant $C_p$ in \eqref{sec:disc,eq:Rieszdiscgen}, \eqref{sec:disc,eq:Rieszdiscgenvectp<2}, and \eqref{sec:disc,eq:Rieszdiscgenvectp>2}, is also independent on the considered cyclic group $G.$ To see this we apply transference methods due to Coifman and Weiss. Namely, from \cite[Corollary 3.16]{CWtr}, it is not hard to deduce that the norms of the one-dimensional Riesz transform on $l^p(\mathbb{Z}_K)$ and $l^p(\mathbb{Z})$ are related by $\|R\|_{l^p(\mathbb{Z}_K) \to l^p(\mathbb{Z}_K)}\leq 2\|R\|_{l^p(\mathbb{Z})\to l^p(\mathbb{Z})}.$ Since every cyclic group $G$ is isomorphic to either $\mathbb{Z}$ or $\mathbb{Z}_K$ we thus have $\|R\|_{l^p(G) \to l^p(G)}\leq 2\|R\|_{l^p(\mathbb{Z})\to l^p(\mathbb{Z})}$ Hence, recalling that $C_p$ is of the form $C'_{p}\|R\|_{l^p(G) \to l^p(G)},$ where $C'_p$ depends only on $p$ and not on $G,$ we obtain the desired $G$ independence of $C_p.$
\end{remark2}

\end{document}